\theoremstyle{definition}
\newtheorem{Def}{Definition}[section]
\newtheorem{Thm}[Def]{Theorem}
\newtheorem{Prop}[Def]{Proposition}
\newtheorem{Ex}[Def]{Example}
\newtheorem{Rem}[Def]{Remark}
\newcommand{\C}{\mathbb{C}}
\newcommand{\R}{\mathbb{R}}
\newcommand{\Q}{\mathbb{Q}}
\newcommand{\Z}{\mathbb{Z}}
\newcommand{\N}{\mathbb{N}}
\newcommand{\PP}{\mathbb{P}}
\newcommand{\DD}{\mathfrak{D}}
\newcommand{\Gr}{\mathrm{Gr}}
\newcommand{\OO}{\mathrm{O}}
\newcommand{\SO}{\mathrm{SO}}
\newcommand{\rank}{\mathrm{rank}}
\newcommand{\Diff}{\mathrm{Diff}}
\newcommand{\Kt}{\mathrm{K3}}
\newcommand{\gKt}{\mathrm{gK3}}
\newcommand{\re}{\mathrm{Re}}
\newcommand{\im}{\mathrm{Im}}
\begin{document}
\title{Mirror symmetry and rigid structures of generalized K3 surfaces}
\author{Atsushi Kanazawa}
\subjclass[2020]{14J33, 14J28, 14J32, 53D37} 
\keywords{mirror symmetry, K3 surfaces, generalized Calabi--Yau manifolds, complex rigidity, K\"ahler rigidity, non-commutative geometry}

\maketitle

\begin{abstract}
The present article is concerned with mirror symmetry for generalized K3 surfaces, with particular emphasis on complex and K\"ahler rigid structures. 
Inspired by the works of Dolgachev, Aspinwall--Morrison and Huybrechts, we introduce a formulation of mirror symmetry for generalized K3 surfaces by using Mukai lattice polarizations. 
This approach solves issues in the conventional formulations of mirror symmetry for K3 surfaces. 
In particular, we provide a solution to the problem of mirror symmetry for singular K3 surfaces. 
Along the way, we investigate complex and K\"ahler rigid structures of generalized K3 surfaces. 
\end{abstract}


\section{Introduction}
The present article is concerned with mirror symmetry for generalized K3 surfaces, with particular emphasis on complex and K\"ahler rigid structures. 
A few important aspects of mirror symmetry for K3 surfaces are unified in the framework of generalized K3 surfaces. 

Mirror symmetry for a K3 surface $S$ is a very subtle problem, as the complex and K\"ahler structures are somewhat mixed in $H^2(S,\C)$. 
In the foundational article \cite{Dol}, Dolgachev formulated mirror symmetry for lattice polarized K3 surfaces as a duality between the algebraic and transcendental cycles.   
Although his formulation works well in many cases, it cannot be definitive due to an assumption that does not hold  in general. 
A notable exception is a singular K3 surface, which has the maximal Picard number $20$.  
This kind of K3 surfaces, also known as a rigid K3 surface, admits no deformation of complex structure while retaining the maximal Picard number. 
A satisfactory formulation of mirror symmetry for K3 surfaces, particularly one that solves the puzzle of mirror symmetry for singular K3 surfaces, has been anticipated.

The celebrated work \cite{AM} of Aspinwall--Morrison is a key article explaining mirror symmetry for K3 surfaces from a physics perspective. 
They discussed SCFTs on a K3 surface $S$ and demonstrated that the moduli space  $\mathfrak{M}_{(2,2)}$ of $N=(2,2)$ SCFTs fibers over the moduli space $\mathfrak{M}_{(4,4)}$ of $N=(4,4)$ SCFTs. 
Mathematically such moduli spaces are related with space-like 4-spaces in $H^*(S,\R) \cong \R^{4,20}$ equipped with the Mukai pairing, and  
$\mathfrak{M}_{(2,2)}$ and $\mathfrak{M}_{(4,4)}$ are identified with certain Grassmannians $\Gr^{po}_{2,2}(H^*(M,\R))\cong  \OO(4,20)/(\SO(2)\times \SO(2)\times \OO(20))$ 
and $\Gr^{po}_{4}(H^*(M,\R)) \cong \OO(4,20)/(\SO(4)\times \OO(20))$ respectively. 
Then mirror symmetry is realized as an involution of these moduli spaces. 
On the other hand, the moduli space $\mathfrak{M}_{\mathrm{HK}}$ of $B$-field shifts of hyperK\"ahler metrics 
can be identified with an open dense subset of $\mathfrak{M}_{(4,4)}$ by the period map $\mathfrak{per}_{\mathrm{HK}}$. 
Inspired by this fact, Huybrechts showed in \cite{Huy} that the counterpart of $\mathfrak{M}_{(2,2)}$ is given by the moduli space $\mathfrak{M}_{\gKt}$ of generalized K3 surfaces, 
which are the K3 version of the generalized Calabi--Yau structures introduced by Hitchin \cite{Hit}.  
The moduli space $\mathfrak{M}_{\Kt} \times H^2(M,\R)$ of $B$-field shifts of K3 surfaces endowed with a Ricci-flat metric is naturally contained in $\mathfrak{M}_{\gKt}$ of real codimension $2$. 
To summarize, there is the following diagram. 
$$
\xymatrix{
\mathfrak{M}_{\Kt} \times H^2(M,\R)   \ar@{^{(}->}[r]^-{\iota} &  \mathfrak{M}_{\gKt} \ar[d] \ar[r]^-{\mathfrak{per}_{\gKt}} &  \Gr^{po}_{2,2}(H^*(M,\R))=\mathfrak{M}_{(2,2)} \ar[d]  \\
 &  \mathfrak{M}_{\mathrm{HK}} \ar[r]^-{\mathfrak{per}_{\mathrm{HK}}}  &   \Gr^{po}_{4}(H^*(M,\R)) =\mathfrak{M}_{(4,4)}
}
$$
It is therefore natural to expect that points in $\mathfrak{M}_\Kt$ might be mirror symmetric to points that are no longer in $\mathfrak{M}_\Kt$. 
We will demonstrate that this is precisely the source of the problems in the conventional formulation of mirror symmetry for K3 surfaces (cf. Remark \ref{AM SCFT}). 

There is a good chemistry between Hitchin's generalized Calabi--Yau structures and mirror symmetry as they both embrace the symplectic and complex on an equal footing. 
Inspired by works of Dolgachev, Aspinwall--Morrison and Huybrechts, we introduce a formulation of mirror symmetry for generalized K3 surfaces (Section \ref{MS for gK3}).  
The key features of our formulation are twofold, and both are guided by Hitchin's theory. 
The first is to extend the scope of lattice polarizations to the Mukai lattice. 
Mixture of degrees of cycles is indispensable for generalized K3 surfaces. 
The second is to treat the two structures in mirror symmetry on completely equal footing. 
The N\'eron--Severi lattice and transcendental lattice are defined in the same fashion, and moreover the lattice polarizations are imposed on both. 

The primary advantage of our formulation is that the elements in $H^0(M,\Z)$ and $H^4(M,\Z)$ are no longer treated as special. 
This allows us to eliminate the artificial assumption present in Dolgachev's formulation of mirror symmetry. 
Indeed, the assumption is a reflection of the existence of the hyperbolic lattice $U \cong H^0(M,\Z) \oplus H^4(M,\Z)$ under mirror symmetry.  
Our formulation naturally settles the two fundamental problems in the conventional formulation (Section \ref{MS problem}).  

Along the way, we investigate the complex and K\"ahler rigid structures of generalized K3 surfaces (Section \ref{rigid structures}).  
The notion of a complex rigid structure is enhanced to the level of generalized K3 surfaces. 
On the other hand, the notion of a K\"ahler rigid structure naturally appears as a counterpart of complex rigid structure from the viewpoint of mirror symmetry. 
We define this notion by using a generalized Calabi--Yau structure, which captures an interesting sublattice of $H^*(M,\Z)$ derived from the K\"ahler moduli space (Section \ref{Kahler rigidity computation}).

 \subsection*{Structure of article}
 Section \ref{gK3} provides a brief summary of generalized K3 structures based on Huybrechts' work \cite{Huy}. 
 Section \ref{rigid structures} investigates the complex and K\"ahler rigid structures on generalized K3 surfaces. 
 Section \ref{MS for gK3} focuses on mirror symmetry for generalized K3 surfaces in comparison with the classical formulation of that for K3 surfaces.  
The main problems are described in Section \ref{MS problem} and they are solved in Section \ref{MS_B} and Section \ref{MS for sing K3}.

\subsection*{Acknowledgment}
The author sincerely thanks Emma Brakkee, Yu-Wei Fan, Kenji Hashimoto and Shinobu Hosono for their valuable discussions. 
Special gratitude is also extended to the referees for their helpful comments and suggestions, particularly regarding the references to \cite{SS}.   
This work is supported by the JSPS Grant-in-Aid Wakate(B) 17K17817, Kiban(C) 22K03296 and Kiban(A) 23H00083.



\section{Generalized K3 surfaces} \label{gK3}

Hitchin's invention of generalized Calabi--Yau structures is a key to unify the symplectic and complex structures \cite{Hit}. 
Such structures have been extensively studied in 2-dimensions by Huybrechts \cite{Huy}. 
In this section, for the sake of completeness, we provide a brief summary of Huybrechts' work.

\subsection{Generalized Calabi--Yau structures}
Let $M$ be a differentiable manifold underlying a K3 surface and $A^{2*}_\C(M)=\oplus_{i=0}^2 A_\C^{2i}(M)$ the space of even differential forms with $\C$-coefficients. 
Let $\varphi_i$ denote the degree $i$ part of $\varphi \in A^{2*}_\C(M)$. 
We define a pairing on $A_\C^{2*}(M)$ by
$$
\langle \varphi, \varphi' \rangle := \varphi_2 \wedge \varphi'_2 -  \varphi_0 \wedge \varphi'_4 -  \varphi_4 \wedge \varphi'_0 \in A_\C^{4}(M), 
$$
which is the Mukai pairing on the level of differential forms.

\begin{Def}[generalized Calabi--Yau structure]
A generalized Calabi--Yau structure on $M$ is a closed form $\varphi \in A^{2*}_\C(M)$ such that
$$
\langle \varphi, \varphi \rangle =0, \ \ \ \langle \varphi, \overline{\varphi} \rangle >0. 
$$
\end{Def}

The special appeal of generalized Calabi--Yau structures resides in the fact that they embrace complex and symplectic structures on an equal footing. 
There are two fundamental Calabi--Yau structures: 
\begin{enumerate}
\item A symplectic structure $\omega$ on $M$ induces a generalized Calabi--Yau structure 
$$
\varphi=e^{\sqrt{-1}\omega} :=1 + \sqrt{-1} \omega-\frac{1}{2}\omega^2.
$$ 
\item A complex structure $J$ of $M$ makes $M$ a K3 surface $M_J$. 
A holomorphic $2$-form $\sigma$ of $M_J$, which is unique up to scaling, defines a generalized Calabi--Yau structure $\varphi=\sigma$. 
We also write $M_\sigma=M_J$. 
\end{enumerate}

For $B \in A^{2}_\C(M)$, $e^{B}$ acts on $A^{2*}_\C(M)$ by exterior product, i.e. 
$$
e^{B}\varphi :=(1 + B + \frac{1}{2}B \wedge B) \wedge \varphi.
$$ 
This action is orthogonal with respect to the pairing $\langle-,-\rangle$, namely 
$$
\langle e^{B} \varphi,  e^{B}\varphi' \rangle  = \langle \varphi, \varphi'  \rangle.
$$ 
A real closed 2-form is called a $B$-field. 
For a $B$-field $B$ and a generalized Calabi--Yau structure $\varphi$, 
the $B$-field shift $e^{B} \varphi$ is a generalized Calabi--Yau structure.

We will later show that a $B$-field is indispensable when we view complex and symplectic structures as special instance of a more general notion (Remark \ref{connect type A and B}).  
The following shows that a generalized Calabi--Yau structure is a $B$-field shift of either one of the fundamental Calabi--Yau structures. 

\begin{Prop}[\cite{Hit}] \label{type A and B}
Let $\varphi$ be a generalized Calabi--Yau structure. 
\begin{enumerate}
\item[(A)] If $\varphi_0 \ne0$, then
$$
\varphi=e^B(\varphi_0 e^{\sqrt{-1}\omega})=\varphi_0 e^{B+\sqrt{-1}\omega}
$$
with a symplectic $\omega$ and a $B$-field $B$. 
\item[(B)] If $\varphi_0=0$, then 
$$
\varphi=e^{B}\sigma= \sigma + B^{0,2} \wedge  \sigma
$$
with a holomorphic 2-form $\sigma$ (for a complex structure) and a $B$-field $B$. 
$B^{0,2}$ is the $(0,2)$-part of $B$ with respect to $\sigma$. 
\end{enumerate}
\end{Prop}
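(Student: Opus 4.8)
\textit{The plan is to} read off the components $\varphi_0,\varphi_2,\varphi_4$ from the defining conditions and match them against the expansion $e^{B+\sqrt{-1}\omega}=1+(B+\sqrt{-1}\omega)+\tfrac12(B+\sqrt{-1}\omega)^2$. First I would record what closedness buys: since $\varphi$ has only even components, $d\varphi=0$ splits by degree into $d\varphi_0=0$ and $d\varphi_2=0$ (the degree-$5$ part vanishes for dimension reasons), so $\varphi_0$ is a constant and $\varphi_2$ is closed. Next I would expand the quadratic condition: because $\varphi_0$ is a function and $\varphi_4$ a top form, $\langle\varphi,\varphi\rangle=\varphi_2\wedge\varphi_2-2\varphi_0\varphi_4$, so $\langle\varphi,\varphi\rangle=0$ is equivalent to the single relation $\varphi_2\wedge\varphi_2=2\varphi_0\varphi_4$. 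The two cases $\varphi_0\ne0$ and $\varphi_0=0$ are then treated separately.

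For (1), I would set $B+\sqrt{-1}\omega:=\varphi_2/\varphi_0$; since $\varphi_0$ is a nonzero constant and $\varphi_2$ is closed, both the real part $B$ and the imaginary part $\omega$ are real closed $2$-forms. The relation $\varphi_2\wedge\varphi_2=2\varphi_0\varphi_4$ gives $\varphi_4=\tfrac12\varphi_0(B+\sqrt{-1}\omega)^2$, and collecting degrees shows exactly $\varphi=\varphi_0\bigl(1+(B+\sqrt{-1}\omega)+\tfrac12(B+\sqrt{-1}\omega)^2\bigr)=\varphi_0 e^{B+\sqrt{-1}\omega}$. It then remains to see that $\omega$ is symplectic, i.e. nondegenerate. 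Writing $e^{B+\sqrt{-1}\omega}=e^{B}e^{\sqrt{-1}\omega}$ and using the orthogonality $\langle e^B\psi,e^B\psi'\rangle=\langle\psi,\psi'\rangle$ with $\psi=\varphi_0 e^{\sqrt{-1}\omega}$, a direct computation reduces the positivity to $\langle\varphi,\overline{\varphi}\rangle=2|\varphi_0|^2\,\omega\wedge\omega>0$, whence $\omega\wedge\omega>0$ and $\omega$ is symplectic. This case is essentially bookkeeping.

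For (2), put $\sigma:=\varphi_2$, which is closed and, by the computations above with $\varphi_0=0$, satisfies $\sigma\wedge\sigma=\langle\varphi,\varphi\rangle=0$ and $\sigma\wedge\overline{\sigma}=\langle\varphi,\overline{\varphi}\rangle>0$. The conceptual core is to recover a complex structure from $\sigma$: pointwise, $\sigma\wedge\sigma=0$ forces $\sigma$ to be decomposable, $\sigma=\theta_1\wedge\theta_2$, and $\sigma\wedge\overline{\sigma}>0$ guarantees that $\theta_1,\theta_2,\overline{\theta_1},\overline{\theta_2}$ are linearly independent, so declaring $\mathrm{span}(\theta_1,\theta_2)$ to be the $(1,0)$-forms defines an almost complex structure $J$ for which $\sigma$ is of type $(2,0)$; closedness $d\sigma=0$ then yields integrability, making $M_J$ a K3 surface with holomorphic $2$-form $\sigma$. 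With $J$ in hand, $\varphi_4$ is an arbitrary (automatically closed) $4$-form, hence of type $(2,2)$; since the space of such forms is spanned pointwise by the nowhere-vanishing $\overline{\sigma}\wedge\sigma$, one writes $\varphi_4=g\,\overline{\sigma}\wedge\sigma$ and hence $\varphi_4=B^{0,2}\wedge\sigma$ with $B^{0,2}=g\overline{\sigma}$, giving $\varphi=\sigma+B^{0,2}\wedge\sigma=e^{B}\sigma$.

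The main obstacle is case (2), on two counts. The substantive classical point is the passage from the linear-algebraic datum $\sigma$ to an honest integrable complex structure $J$; I would isolate this as a lemma (decomposability, the independence argument, and Newlander--Nirenberg-type integrability from $d\sigma=0$), or else invoke Huybrechts' description of the period domain. The second, more technical point is that the naive $B=\overline{g}\sigma+g\overline{\sigma}$ built above need not be closed, whereas a $B$-field must be; I expect to fix its $(0,2)$-part and adjust the free $(1,1)$-part so as to kill $dB$, which amounts to solving $\overline{\partial}B^{1,1}=-\partial g\wedge\overline{\sigma}$ for a real $(1,1)$-form. This is solvable because the right-hand side is $\overline{\partial}$-closed and $H^{1,2}(M_J)=0$ on a K3 surface (equivalently, via the $\partial\overline{\partial}$-lemma), so a closed real representative $B$ exists. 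By contrast, in case (1) the closedness of $B$ and $\omega$ was automatic, which is precisely why (2) is where the real work lies.
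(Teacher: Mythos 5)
The paper offers no proof of this proposition --- it is imported verbatim from Hitchin \cite{Hit} (see also Huybrechts \cite{Huy}, \S 1) --- so there is no in-paper argument to compare against; your reconstruction is the standard one and, read carefully, it is correct. Case (1) is indeed pure bookkeeping, and your identification of the substantive content of case (2) --- decomposability of $\sigma$ from $\sigma\wedge\sigma=0$, linear independence from $\sigma\wedge\overline{\sigma}>0$, and integrability of the resulting almost complex structure from $d\sigma=0$ via involutivity of the kernel of $\sigma$ and Newlander--Nirenberg --- is exactly the classical input. One step deserves a word of care, which you half-flag yourself: vanishing of $H^{1,2}(M_J)$ alone produces a solution $y$ of $\overline{\partial}y=-\partial g\wedge\overline{\sigma}$ that need not be real, and the naive symmetrization $y+\overline{y}$ spoils the equation, since $\overline{\partial}\,\overline{y}=\overline{\partial y}$ is uncontrolled; the clean route is the one you mention in parentheses: $-\partial g\wedge\overline{\sigma}=-\partial(g\overline{\sigma})$ is $\partial$-exact and (trivially, by type) $\overline{\partial}$-closed, so the $\partial\overline{\partial}$-lemma (K3 surfaces are K\"ahler) gives $-\partial g\wedge\overline{\sigma}=\partial\overline{\partial}h$ for a $(0,1)$-form $h$, and then $B^{1,1}=-\partial h-\overline{\partial h}$ is real with $\overline{\partial}B^{1,1}=-\partial g\wedge\overline{\sigma}$, because $\overline{\partial}\bigl(\overline{\partial h}\bigr)=0$ by type. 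With that substitution for the $H^{1,2}=0$ phrasing, the proof is complete as written.
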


Generalized Calabi--Yau structures of (A) and (B) in Proposition \ref{type A and B} are called type $A$ and type $B$ respectively. 

We consider the group $\Diff_*(M)$ of the diffeomorphisms $f$ of $M$ such that the induced action $f^*$ on $H^2(M,\Z)$ is trivial. 
Generalized Calabi--Yau structures $\varphi$ and $\varphi'$ are called isomorphic if there exist an exact $B$-field $B$ and a diffeomorphism $f \in \Diff_*(M)$ 
such that $\varphi=e^{B} f^*\varphi'$.

The most fascinating aspect of generalized Calabi--Yau structures is the occurrence of the classical Calabi--Yau structure $\sigma$ (type $B$)
as well as of symplectic generalized Calabi--Yau structure $e^{\sqrt{-1}\omega}$ (type $A$) in the same moduli space. 
This allows us to pass from the symplectic to the complex world in a continuous fashion. 

\begin{Rem}[\cite{Hit}] \label{connect type A and B}
For a complex structure $\sigma$, the real and imaginary parts $\re(\sigma), \im(\sigma)$ are symplectic forms.  
A family of generalized Calabi--Yau structures of type $A$
$$
\varphi_t:=t e^{\frac{1}{t}(\re(\sigma) + \sqrt{-1} \im(\sigma))}=t+\sigma
$$
converges, as $t$ goes to $0$, to the generalized Calabi--Yau structure $\sigma$ of type $B$. 
In this way, the $B$-fields interpolate between generalized Calabi--Yau structures of type $A$ and $B$.  
\end{Rem}


\subsection{(hyper)K\"ahler structures} \label{hKahler str}

\begin{Def}
Let $\varphi$ be a generalized Calabi--Yau structure.  
We define $P_\varphi \subset A^*(M)$ and $P_{[\varphi]} \subset H^*(M,\R)$ by 
\begin{align}
P_\varphi & := \R \mathrm{Re} \varphi \oplus  \R \mathrm{Im} \varphi \subset  A^*(M), \notag \\
P_{[\varphi]} & := \R [\mathrm{Re} \varphi] \oplus  \R [\mathrm{Im} \varphi] \subset  H^*(M,\R) \notag 
\end{align}
i.e. the $\R$-vector spaces spanned by the real and imaginary parts of $\varphi$ and $[\varphi]$ respectively. 
\end{Def}
$P_\varphi$ is positive with respect to the pairing $\langle-,-\rangle$ at every point.  
$P_{[\varphi]} \subset H^*(M,\R) \cong \R^{4,20}$ is a positive 2-plane with respect to the Makai pairing. 

We say that generalized Calabi--Yau structures $\varphi$ and $\varphi'$ are orthogonal if $P_\varphi $ and $P_{\varphi'}$ are pointwise orthogonal. 
This orthogonality is a stronger condition than just $\langle \varphi, \varphi' \rangle=0$.

\begin{Def}[K\"ahler]
A generalized Calabi--Yau structure $\varphi$ is called K\"ahler if there exists another generalized Calabi--Yau structure $\varphi'$ orthogonal to $\varphi$. 
In this case, $\varphi'$ is called a K\"ahler structure for $\varphi$. 
\end{Def}

\begin{Ex}
Let $\varphi=\sigma$ be a holomorphic $2$-form for a complex structure on $M_\sigma$. 
If $\varphi'$ is a K\"ahler structure for $\varphi$, then it is of the form $\varphi'=\varphi'_0e^{B+\sqrt{-1}\omega}$ (type $A$) 
for some scalar $\varphi_0'$, $B$-field $B$ and symplectic form $\omega$.  
The orthogonality is equivalent to $\sigma \wedge B = \sigma \wedge \omega =0$. 
Therefore $B$ is a closed real $(1,1)$-form and either $\pm \omega$ is a K\"ahler form with respect to $\sigma$. 
\end{Ex}

A hyperK\"ahler structure is then defined as a special instance of a K\"ahler structure. 
Recall first that a K\"ahler form $\omega$ on a K3 surface is a hyperK\"ahler form if $2\omega^2=C\sigma \wedge \overline{\sigma}$ for some $C \in \R$, 
which may be assumed to be $1$ by rescaling $\sigma$.

\begin{Def}[hyperK\"ahler]
A generalized Calabi--Yau structure $\varphi$ is hyperK\"ahler if there exists a K\"ahler structure $\varphi'$ for $\varphi$ 
such that 
$$
\langle \varphi, \overline{\varphi} \rangle = \langle \varphi', \overline{\varphi'} \rangle.
$$ 
Such $\varphi'$ is called a hyperK\"ahler structure for $\varphi$. 
\end{Def}

\begin{Rem} \label{Rem B-shifts}
\begin{enumerate}
\item The definition of a (hyper)K\"ahler structure is symmetric for $\varphi$ and $\varphi'$. 
\item Let $\varphi$ be a generalized Calabi--Yau structure and $\varphi'$ a (hyper)K\"ahler for $\varphi$. 
Then $e^{B} \varphi'$ is a (hyper)K\"ahler structure for $e^{B} \varphi$. 
\end{enumerate}
\end{Rem}

For the later use, we give a classification of the hyperK\"ahler structures for a generalized Calabi--Yau structure $\varphi$. 
By Remark \ref{Rem B-shifts}, we may first assume that $\varphi$ is either of the form $\varphi=\lambda e^{\sqrt{-1}\omega}$ or $\varphi=\sigma$.   
\begin{enumerate}
\item[(A)] 
If $\varphi=\lambda e^{\sqrt{-1}\omega}$, then a hyperK\"ahler structure is either 
\begin{enumerate}
\item[(A)] a generalized Calabi--Yau structure $\varphi'=\lambda' e^{B'+\sqrt{-1}\omega'}$ such that
\begin{enumerate}
\item $\omega \wedge \omega'=\omega \wedge B'=\omega' \wedge B = 0$, $B'^2=\omega^2+\omega'^2$, 
\item $|\lambda|^2 \omega^2 = |\lambda'|^2 \omega'^2$, 
\end{enumerate}
\item[(B)] a generalized Calabi--Yau structure $\varphi'=\sigma$, where either $\pm \omega$ is a hyperK\"ahler form such that $2|\lambda|^2\omega^2=\sigma \wedge \overline{\sigma}$.
\end{enumerate}
\item[(B)] 
If $\varphi=\sigma$, then a hyperK\"ahler structure is a generalized Calabi--Yau structure $\varphi'=\lambda e^{B+\sqrt{-1}\omega}$,  
where $B$ is a closed $(1,1)$-form and either $\pm \omega$ is a hyperK\"ahler form such that $2|\lambda|^2\omega^2=\sigma \wedge \overline{\sigma}$. 
\end{enumerate}
Any hyperK\"ahler structure is a $B$-field shift of one of the above cases. 
In summary, there are only 3 possible combinations: 
\begin{center}
(type $A$, type $B$), (type $B$, type $A$), (type $A$, type $A$).
\end{center}


\subsection{Generalized K3 surfaces} 

\begin{Def}[generalized K3 surface] 
A generalized K3 surface is a pair $(\varphi,\varphi')$ of generalized Calabi--Yau structures 
such that $\varphi$ is a hyperK\"ahler structure for $\varphi'$. 
\end{Def}

\begin{Rem}
A K3 surface $M_\sigma$ with a chosen hyperK\"ahler structure $\omega$ (which means $2 \omega^2=\sigma \wedge \overline{\sigma}$)  
can be identified with a generalized K3 surface $(e^{\sqrt{-1}\omega},\sigma)$.  
\end{Rem}

A $B$-field shift $e^{B} (\varphi,\varphi')=(e^{B}\varphi,e^{B}\varphi')$ of a generalized K3 surface $(\varphi,\varphi')$ is a generalized K3 surface. 
Generalized K3 surfaces $(\varphi,\varphi')$ and $(\psi,\psi')$ are called isomorphic if there exist a diffeomorphism $f \in \Diff_*(M)$ and an exact $B$-field $B \in A^2(M)$ 
such that $(\varphi,\varphi')=e^{B} f^*(\psi,\psi')$. 
As indicated in the introduction, the moduli space $\mathfrak{M}_{\gKt}$ of generalized K3 surfaces has a nice description in terms of the cohomology groups.  
\begin{Thm} \label{Torelli gK3}
The period map 
$$
\mathfrak{per}_{\gKt}: \mathfrak{M}_{\gKt} \rightarrow \Gr^{po}_{2,2}(H^*(M,\R)), \ \ \ (\varphi,\varphi')\mapsto (P_{[\varphi]},P_{[\varphi']})
$$
is an immersion with dense image. 
Here $\Gr^{po}_{2,2}(H^*(M,\R))$ is the Grassmannian parametrizing the orthogonal pairs of positive oriented 2-planes in $H^*(M,\R) \cong \R^{4,20}$. 
\end{Thm}
A point in the complement of the image of $\mathfrak{per}_{\gKt}$ is orthogonal to a $(-2)$-class and can be interpreted as a degenerate structure. 


\subsection{Moduli spaces and period domains}

We lastly compare the moduli space $\mathfrak{N}_{\mathrm{gCY}} :=\{\C \varphi\}/\cong$ of the generalized Calabi--Yau structures of hyperK\"ahler type, 
and the moduli space $\mathfrak{N}_{\mathrm{K3}} :=\{\C \sigma\}/\mathrm{Diff}_*(M)$ of the classical marked K3 surfaces. (Figure \ref{Moduli pic})

\begin{Thm} \label{Torelli theorem}

The classical period map $\mathfrak{per}_{\Kt}$ naturally extends to the period map $\mathfrak{per}_{\mathrm{gCY}}$ for the generalized Calabi--Yau structures of hyperK\"ahler type: 
$$
 \xymatrix@=18pt{
  \mathfrak{N}_{\mathrm{gCY}} \ar@{}[d]|{\bigcup}  \ar[rr]^-{\mathfrak{per}_{\mathrm{gCY}}}_-{\C \varphi \to [\varphi]}  \ \ 
  & &\ar@{}[d]|{\bigcup}  \widetilde{\DD}:=\{ [\varphi] \in \mathbb{P}(H^*(M,\C)) \ | \ \langle \varphi, \varphi \rangle=0, \langle \varphi, \overline{\varphi}  \rangle>0 \}  \\
 \mathfrak{N}_{\Kt}  \ar[rr]^-{\mathfrak{per}_{\Kt}}_-{\C \sigma \to [\sigma]} \ \  
 & &  \DD :=\{ [\sigma] \in \mathbb{P}(H^2(M,\C)) \ | \ \langle \sigma, \sigma \rangle=0, \langle \sigma, \overline{\sigma} \rangle >0 \}  
}
$$
$\mathfrak{per}_{\mathrm{gCY}}$ is \'etale surjective, and bijective over the complement of the hyperplane section $\mathbb{P}(H^2(M,\C)\oplus H^4(M,\C))\cap \widetilde{\DD}$. 
\end{Thm}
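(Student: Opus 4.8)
The plan is to reduce everything to the surjectivity and local Torelli theorems for classical K3 surfaces, organizing the fibres of $\mathfrak{per}_{\mathrm{gCY}}$ by the type $A$/type $B$ dichotomy of Proposition~\ref{type A and B}. First I would record the compatibility: a marked K3 surface $\C\sigma$ gives the type $B$ structure $\varphi=\sigma$ with period $[\sigma]\in\PP(H^2(M,\C))\subset\PP(H^*(M,\C))$, and since $\DD=\widetilde{\DD}\cap\PP(H^2(M,\C))$ the bottom square commutes and $\mathfrak{N}_\Kt$ is the preimage of this hyperplane. I would also note that every $[\varphi]\in\widetilde{\DD}$ is automatically of hyperK\"ahler type: the positive $2$-plane $P_{[\varphi]}$ has orthogonal complement of signature $(2,20)$ in $H^*(M,\R)\cong\R^{4,20}$, hence contains positive $2$-planes, so after rescaling one obtains an orthogonal partner of equal norm, whose integrability is furnished by Proposition~\ref{uniqueness}.

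Over the open stratum $\{\varphi_0\neq0\}$ --- the complement of the hyperplane section --- I would prove bijectivity directly. Normalizing $\varphi_0=1$, the relation $\langle\varphi,\varphi\rangle=\varphi_2^2-2\varphi_4=0$ forces $\varphi_4=\tfrac12\varphi_2^2$, so $[\varphi]=[e^\beta]$ with $\beta=\varphi_2\in H^2(M,\C)$, and $\langle e^\beta,\overline{e^\beta}\rangle=2(\im\beta)^2>0$ shows $\im\beta$ has positive square. Surjectivity of the classical period map together with Yau's theorem then realizes $\im\beta$ by a hyperK\"ahler form for a suitable complex structure, so $e^\beta=e^{\re\beta}\wedge e^{\sqrt{-1}\im\beta}$ is realized by an honest type $A$ structure. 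The key to injectivity is rigidity: any hyperK\"ahler-type $\psi$ with $[\psi]=[e^\beta]$ has $\psi_0\neq0$, hence $\psi=\psi_0e^{B'+\sqrt{-1}\omega'}$ with $[B'+\sqrt{-1}\omega']=\beta$, and two representatives of the same class differ by exact forms, i.e.\ by an exact $B$-field and a Moser diffeomorphism in $\Diff_*(M)$. Thus each fibre over the open stratum is a single isomorphism class.

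Over the hyperplane section $\{\varphi_0=0\}$ the structure is of type $B$, and Proposition~\ref{type A and B} gives $[\varphi]=[e^B\sigma]=[\sigma+B^{0,2}\wedge\sigma]$; since $B\mapsto B^{0,2}\wedge\sigma$ surjects onto $H^4(M,\C)$ as $B$ ranges over real $2$-forms, every such period is a $B$-field transform of a classical period $[\sigma]\in\DD$. Surjectivity here is exactly the classical surjectivity of $\mathfrak{per}_\Kt$, and the local isomorphism property follows from the classical local Torelli theorem together with the free holomorphic variation in the transverse $H^0\oplus H^4$ and $B$-field directions. Injectivity, by contrast, genuinely fails: realizing $[\sigma]$ requires choosing a complex structure with that Hodge structure, and the classical marked period map $\mathfrak{N}_\Kt\to\DD$ is itself only \'etale surjective, its fibres carrying the familiar non-separatedness. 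This is precisely the discrepancy between ``a class $\beta$ determines $e^\beta$'' in type $A$ and ``a class $[\sigma]$ determines only a Hodge structure'' in type $B$, which is what confines bijectivity to the complement of the hyperplane.

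The hard part will be the \'etale property. I expect the main work to consist in upgrading classical local Torelli to the generalized setting, namely verifying that $\mathfrak{per}_{\mathrm{gCY}}$ is a local diffeomorphism in all $24$ complex directions, including those transverse to $\mathfrak{N}_\Kt$ arising from the $B$-field and the volume class. Realizing a cohomology class in $\widetilde{\DD}$ by an actual closed generalized CY structure --- where Yau's theorem and the surjectivity of $\mathfrak{per}_\Kt$ enter --- is the deepest input, and locating the injectivity failure exactly along $\{\varphi_0=0\}$ is the subtlety one must pin down to match the stated locus of bijectivity.
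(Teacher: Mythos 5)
You should first note that the paper contains no proof of this theorem at all: it is imported verbatim from Huybrechts \cite{Huy}, so your attempt can only be measured against Huybrechts' argument. Your overall architecture does track it correctly --- stratify $\widetilde{\DD}$ by $\varphi_0\neq 0$ versus $\varphi_0=0$ using Proposition \ref{type A and B}, normalize $[\varphi]=[e^{\beta}]$ on the open stratum, realize $\im\beta$ by a hyperK\"ahler form via classical surjectivity plus Yau, and confine the failure of injectivity to the classical non-Hausdorff phenomena along $\PP(H^2\oplus H^4)$. But the injectivity step over the open stratum has a genuine gap. Writing ``two representatives of the same class differ by exact forms, i.e.\ by an exact $B$-field and a Moser diffeomorphism'' is not a proof: Moser's lemma produces an isotopy only from a \emph{path} of cohomologous symplectic forms, and two hyperK\"ahler forms $\omega,\omega'$ with $[\omega]=[\omega']$ are not a priori joined by such a path. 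What is actually needed is that the set of hyperK\"ahler forms in a fixed cohomology class is a single $\Diff_*(M)$-orbit, and this is precisely where the deep input enters --- the global Torelli theorem for K3 surfaces together with the uniqueness part of Yau's theorem (equivalently, the structure of the moduli space of Einstein metrics on $M$). This is the very ingredient that makes the fibres over the complement of the hyperplane section single points while the fibres over it are not, so it cannot be replaced by a soft Moser argument; as written, your fibre could still contain several components of cohomologous hyperK\"ahler forms.

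The second gap is that the \'etale property is announced rather than proved: ``I expect the main work to consist in upgrading classical local Torelli'' defers exactly the part that does not follow formally from the classical statement, namely openness of $\mathfrak{per}_{\mathrm{gCY}}$ at type $B$ points in the directions transverse to $\PP(H^2(M,\C)\oplus H^4(M,\C))$. The mechanism for this in \cite{Huy} is the interpolating families of the paper's example, $\varphi_t=t\,e^{\frac{1}{t}(\re(\sigma)+\sqrt{-1}\im(\sigma))}$, which degenerate type $A$ structures to $\sigma$ and show that the $H^0\oplus H^4$-directions are genuinely attained near a type $B$ point; none of this appears in your sketch. A smaller but real defect: your opening claim that every $[\varphi]\in\widetilde{\DD}$ is ``automatically of hyperK\"ahler type, with integrability furnished by Proposition \ref{uniqueness}'' is circular, since Proposition \ref{uniqueness} presupposes an actual generalized K3 surface; hyperK\"ahler type is a property of a realizing \emph{structure}, not of a cohomology class, and becomes available only after the surjectivity argument (choice of a generic complex structure with $[\im\beta]$ in the K\"ahler cone, then Yau) that you correctly give later --- so the remark should simply be deleted and the realization statement allowed to carry that weight.
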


Therefore the generalized Calabi--Yau structures of hyperK\"ahler type may be considered as geometric realizations of the points in the extended period domain $ \widetilde{\DD}$.  
For this reason, we hereafter regard a generalized Calabi--Yau structure of hyperK\"ahler type as an element in $H^*(M,\C)$. 

\begin{figure}[htbp]
 \begin{center} 
\includegraphics[width=70mm]{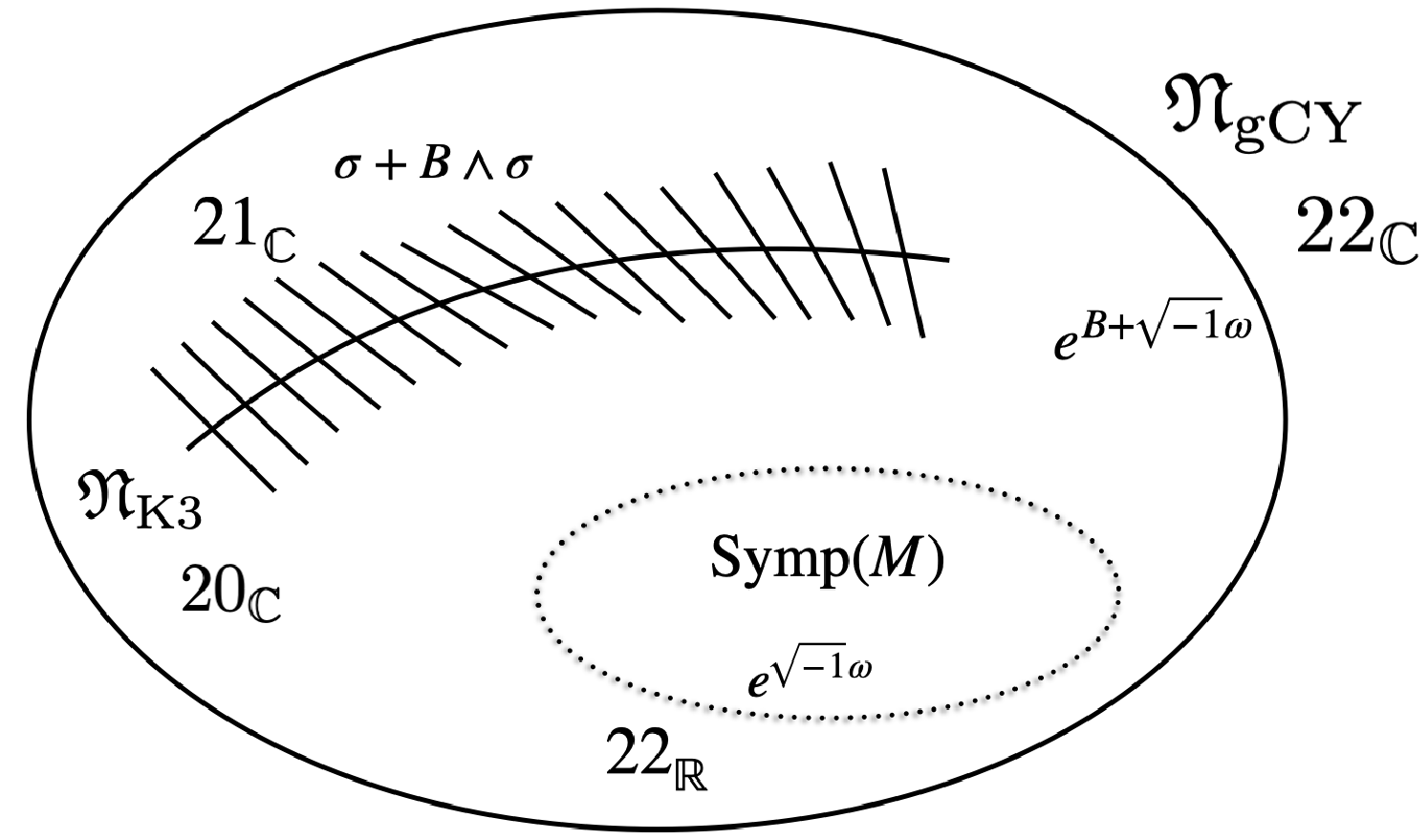}
\caption{Moduli space  $\mathfrak{N}_{\mathrm{gCY}}$} \label{Moduli pic}
 \end{center} 
\end{figure}


\section{Rigid structures}  \label{rigid structures}


\subsection{Lattices and K3 surfaces}

In order to fix notation, let us begin with some basics of lattices and K3 surfaces. 
The hyperbolic lattice $U$ is the rank $2$ even unimodular lattice defined by the Gram matrix $\begin{bmatrix} 0 & 1\\ 1 & 0\\ \end{bmatrix}$. 
$E_{8}$ is the rank $8$ negative-definite even unimodular lattice defined by the corresponding Cartan matrix. 
We denote by $\Lambda_{K3}:=U^{\oplus 3}\oplus E_8^{\oplus 2}$ the K3 lattice. 
For a lattice $L$, we write $L_\Q:=L \otimes_\Z \Q$, and $L_\R$ and $L_\C$ are defined in a similar manner. 

Let $S$ be a K3 surface. 
Then $H^2(S,\Z)$ with its intersection form is isomorphic to the K3 lattice $\Lambda_{K3}$. 
The N\'eron-Severi lattice and transcendental lattice are defined respectively by
$$
NS(S):=H^{1,1}(S,\R) \cap H^2(S,\Z), \ \ \ T(S):=NS(S)^\perp, 
$$
where the orthogonal complement is taken in $H^2(S,\Z)$. 
The Picard number is $\rho(S):=\rank (NS(S))$.  
The extended N\'eron-Severi lattice $NS'(S)$ is the sublattice 
\begin{align*}
NS'(S) :&= H^0(S,\Z) \oplus NS(S) \oplus H^4(S,\Z) \\
& \cong NS(S) \oplus U
\end{align*}
of the Mukai lattice $H^*(S,\Z) \cong U^{\oplus 4}\oplus E_8^{\oplus 2}$.


\subsection{N\'eron--Severi lattice and transcendental lattice} \label{NS and T}

We introduce the N\'eron--Severi lattice and the transcendental lattice of a generalized K3 surface $X=(\varphi,\varphi')$.  

\begin{Def}  \label{NS and T def}
The N\'eron--Severi lattice and transcendental lattices of $X=(\varphi,\varphi')$ are defined respectively by 
\begin{align*}
\widetilde{NS}(X) :=\{ \delta \in H^*(M,\Z) \ | \ \langle \delta, \varphi' \rangle =0 \}, \\
\widetilde{T}(X) :=\{ \delta \in H^*(M,\Z) \ | \ \langle \delta, \varphi \rangle =0 \}. 
\end{align*}
\end{Def}

In \cite{Huy} the Picard group $\mathrm{Pic}(\varphi)$ and the transcendental lattice $T(\varphi)$ are defined for a generalized Calabi--Yau structure $\varphi$.  
Our definition of the transcendental lattice is different from \cite{Huy}, where it is defined as the orthogonal complement of the Picard group. 
 
It is important that we define $\widetilde{NS}(X)$ and $\widetilde{T}(X)$ on a completely equal footing, 
and hence the intersection $\widetilde{NS}(X) \cap \widetilde{T}(X)$ may be non-trivial. 
Moreover, it is easy to see that 
$$
\varphi \in \widetilde{NS}(X)_\C, \ \ \ \varphi' \in \widetilde{T}(X)_\C,
$$
and 
$$
2 \le \rank(\widetilde{NS}(X)) \le 22, \ \ \ 
2 \le \rank(\widetilde{T}(X)) \le 22.
$$
For example, $\varphi$ is chosen generically in $\widetilde{NS}(X)_\C$, then $\widetilde{NS}(X)^\perp=\widetilde{T}(X)$. 

For later use, given $\psi \in H^*(M,\C)$, we denote by $L_\psi$ the smallest sublattice $L \subset H^*(M,\Z)$ such that $\psi \in L_\C$.   
Then we may write 
$$
\widetilde{NS}(X)=L_{\varphi'}^\perp, \ \ \ \widetilde{T}(X)=L_{\varphi}^\perp.
$$

\begin{Rem} \label{no longer algebraic}
In general, $H^0(M,\Z), H^4(M,\Z) \not \subset  \widetilde{NS}(X)$.     
This feature has an important consequence in our formulation of mirror symmetry (Section \ref{MS for gK3}).  
\end{Rem}

The following examples will play an important role in Section \ref{MS for gK3}. (cf. Example 4.4 of \cite{Huy} )

\begin{Ex}\label{NS and T example}
We denote by $\delta_i$ the degree $i$ part of $\delta \in H^*(M,\Z)$. 
\begin{enumerate}
\item 
If $\varphi'=\sigma + B^{0,2} \wedge \sigma$, a $B$-field shift of $\sigma$ by $B \in H^2(M,\R)$, then 
\begin{align*}
\widetilde{NS}(X) &= \{\delta_0+ \delta_2 \ | \ \int_M \delta_2 \wedge \sigma = \delta_0 \int_M B^{0,2} \wedge \sigma \} \oplus  H^4(M,\Z) \\
& = e^B NS'(M_\sigma)_\R \cap H^*(M,\Z). 
\end{align*}
There is a natural inclusion $NS(M_\sigma) \oplus H^4(M,\Z) \subset \widetilde{NS}(X)$. 
The inclusion is strict if $B\in H^2(M,\Q)$, and then $\widetilde{NS}(X)$ contains the lattice 
$$
\big(\Z(1,B,\frac{1}{2}B^2) \cap H^*(M,\Z) \big) \oplus H^4(M,\Z) \cong U(m)
$$ 
for $m$ minimal with $m(1,B,\frac{1}{2}B^2) \in H^*(M,\Z)$. 
In particular, for the classical case $\varphi'=\sigma$, we recover 
\begin{align*}
\widetilde{NS}(X) &= H^0(M,\Z) \oplus NS(M_\sigma)\oplus H^4(M,\Z) \\
& = NS'(M_\sigma). 
\end{align*}

\item  If $\varphi=e^{B+\sqrt{-1}\omega}$ for $B \in H^2(M,\R)$ and a symplectic form $\omega$, the condition $\langle \delta,\varphi \rangle=0$ is equivalent to
\begin{align*}
\int_M \delta_2 \wedge B - \frac{\delta_0}{2}\int_M(B^2-\omega^2) + \int_M \delta_4 =0, \\
\int_M \delta_2 \wedge \omega - \delta_0 \int_M B \wedge \omega=0. 
\end{align*} 
In particular, if $\sigma$ is a holomorphic 2-form for a complex structure on $M$, 
then for a generic choice of $B,\omega \in NS(M_\sigma)$ 
$$
\widetilde{T}(X)=T(M_\sigma).
$$
\end{enumerate}

\end{Ex}


\subsection{Complex rigidity}

The singular K3 surfaces, also known as complex rigid K3 surfaces, do not admit any complex deformation while maintaining the maximal Picard number $20$
(or equivalently, the minimal rank $2$ of the transcendental lattice).   
We first provide a generalization of the singular K3 surfaces.

\begin{Def}
A generalized K3 surface $X=(\varphi,\varphi')$ is called complex rigid if $\varphi'$ is of type $B$ and $\rank (\widetilde{NS}(X))=22$. 
\end{Def}

Note that the condition $\rank (\widetilde{NS}(X))=22$ is equivalent to $\rank(L_{\varphi'})=2$. 

\begin{Thm} \label{rigid K3}
A complex rigid generalized K3 surface is of the form $X=(\lambda e^{B+B'+\sqrt{-1}\omega}, \sigma+B'\wedge \sigma)$, where 
\begin{enumerate}
\item $M_\sigma$ is a singular K3 surface, 
\item $B \in H^{1,1}(M_\sigma,\R)$, $B' \in H^2(M,\Q)$,  
\item either $\pm \omega$ is a hyperK\"ahler form for $\sigma$. 
\end{enumerate}
In other words, $X$ is a rational $B$-field shift of a singular K3 surface equipped with a complexified K\"ahler structure.    
\end{Thm}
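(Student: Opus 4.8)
The plan is to first pin down the B-structure $\varphi_B$ from the rank condition alone, and then read off $\varphi_A$ from the classification of hyperK\"ahler structures recorded above. Throughout I identify $H^4(M,\Z)=\Z[\mathrm{pt}]$ with $[\mathrm{pt}]$ the positive generator.

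First I would write, using Proposition \ref{type A and B}(2), $\varphi_B=\sigma+\beta[\mathrm{pt}]$, where $\sigma$ is the holomorphic $2$-form of a complex structure $J$ on $M$ and $\beta=\int_M B\wedge\sigma\in\C$ encodes the $B$-field. Since $\varphi_B$ has no degree-$0$ part, $L_{\varphi_B}\subset H^2(M,\Z)\oplus H^4(M,\Z)$, and I analyse its rank via the projection $p$ onto $H^2$. Setting $W=L_{\varphi_B}\otimes\Q$, I will show $p(W)=L_\sigma\otimes\Q$, where $L_\sigma$ is the transcendental lattice of the classical K3 surface $M_\sigma$: the inclusion $\supseteq$ holds because $p(\re\varphi_B)=\re\sigma$ and $p(\im\varphi_B)=\im\sigma$, while $\subseteq$ follows from minimality of $W$, since $(L_\sigma\otimes\Q)\oplus\Q[\mathrm{pt}]$ already contains $\re\varphi_B,\im\varphi_B$ in its real span. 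The kernel of $p|_W$ is $W\cap\Q[\mathrm{pt}]$, of dimension $0$ or $1$, so that $\rank L_{\varphi_B}=\rank L_\sigma+\dim(W\cap\Q[\mathrm{pt}])$.

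Next, the hypothesis $\rank\widetilde{\NS}(X)=\rank L_{\varphi_B}^\perp=22$ is equivalent to $\rank L_{\varphi_B}=2$ (as $H^*(M,\Z)$ has rank $24$). Because $\re\sigma,\im\sigma$ span a positive plane, $\rank L_\sigma\ge 2$; together with the displayed formula this forces $\rank L_\sigma=2$ and $W\cap\Q[\mathrm{pt}]=0$. The first condition says precisely that $M_\sigma$ has Picard number $20$, i.e.\ is a complex rigid K3 surface. The second says $p|_W\colon W\to L_\sigma\otimes\Q$ is an isomorphism, so $W$ is the graph of a rational functional $\ell\colon L_\sigma\otimes\Q\to\Q$ with $\beta=\ell(\sigma)$. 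Using nondegeneracy of the intersection form on $L_\sigma$, I represent $\ell(\,\cdot\,)=(B',\,\cdot\,)$ for a unique $B'\in L_\sigma\otimes\Q\subset H^2(M,\Q)$; then $\beta=(B',\sigma)=\int_M B'\wedge\sigma$, whence $\varphi_B=\sigma+B'\wedge\sigma=e^{B'}\sigma$ with $B'$ rational, as required. To recover $\varphi_A$, I note that $\varphi_A$ is a hyperK\"ahler structure for $\varphi_B=e^{B'}\sigma$, so by the rational $B$-field transform $e^{-B'}$ and Remark \ref{Rem B-shifts}(2), $e^{-B'}\varphi_A$ is a hyperK\"ahler structure for $\sigma$. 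The classification of hyperK\"ahler structures for a type-$B$ structure $\sigma$ then gives $e^{-B'}\varphi_A=\lambda e^{B+\sqrt{-1}\omega}$ with $B\in H^{1,1}(M_\sigma,\R)$ a closed $(1,1)$-form and $\pm\omega$ a hyperK\"ahler form satisfying $2|\lambda|^2\omega^2=\sigma\wedge\overline{\sigma}$. Transforming back, $\varphi_A=\lambda e^{B+B'+\sqrt{-1}\omega}$, and assembling the two pieces yields the asserted form of $X$; the converse is a direct verification running the same rank computation backwards.

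I expect the main obstacle to be the middle step, namely extracting a genuinely rational $B$-field from the purely numerical rank condition. The content there is that $\rank L_{\varphi_B}=2$ not only rigidifies $M_\sigma$ but also forces the degree-$4$ coefficient $\beta$ to be a \emph{rational}-linear function of $\sigma$ on the transcendental lattice, which is exactly what the rationality of $B'$ encodes. Particular care will be needed in the rank bookkeeping (primitivity of $L_{\varphi_B}$ and nondegeneracy of $L_\sigma$) and, crucially, in verifying that the functional $\ell$ is defined over $\Q$ and not merely over $\R$.
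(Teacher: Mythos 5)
Your proposal is correct and follows essentially the same route as the paper: normalize $\varphi_B$ via Proposition \ref{type A and B}(2), untwist by the $B$-field, invoke the classification of hyperK\"ahler structures for a type-$B$ structure $\sigma$ to get $\varphi_A=\lambda e^{B+B'+\sqrt{-1}\omega}$, and reduce everything to the equivalence $\rank(L_{\varphi_B})=2 \Leftrightarrow \rank(T(M_\sigma))=2$ and $B'$ rational. The only difference is that the paper asserts this last equivalence in one line, whereas you actually prove it (via the projection to $H^2$ and the graph of a rational functional $\ell=(B',\cdot)$ on $T(M_\sigma)\otimes\Q$), which is a genuine and welcome filling-in of the paper's terse argument rather than a departure from it.
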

\begin{proof}
Let $X=(\varphi, \varphi')$ be complex rigid.  
By the classification of hyperK\"ahler structures (comments below Remark \ref{Rem B-shifts}), $\varphi$ must be of type $A$.  
Hence we can write $X=e^{B'}(\lambda e^{B+\sqrt{-1}\omega},\sigma)$, where $Y=(\lambda e^{B+\sqrt{-1}\omega},\sigma)$ is a generalized K3 surface, 
i.e. $B$ is a closed real $(1,1)$-form and either $\pm \omega$ is a hyperK\"ahler form for $\sigma$.  
On the other hand, for $\varphi'=\sigma+B'\wedge \sigma$, $\rank(L_{\varphi'})=2$ if and only if $\rank(T(M_\sigma))=2$ and $B'$ is rational. 
This can be seen as follows. 
If $\rank(L_{\varphi'})=2$, 
there exist $a,b \in \R$ and $u_1,u_2 \in H^*(M,\Z)$ such that 
$$
\varphi' = \sigma+B'\wedge \sigma = a u_1 + \sqrt{-1}b u_2. 
$$
For $i=1,2$, let $v_i=(u_i)_2\in H^2(M,\Z)$ be  the degree 2 part of $u_i$.  
Then we may write $\sigma= a v_1 + \sqrt{-1}b v_2$ and hence $\rank(L_{\sigma})=2$, which means $\rank(T(M_\sigma))=2$. 
Moreover, we have
\begin{align}
\varphi'=e^B \sigma=a (v_1+B'\wedge v_1)+\sqrt{-1}b (v_2+B'\wedge v_2) \label{phi'}, 
\end{align}
and in order for $\rank(L_{\varphi'})=2$, the $B$-field $B'$ must be rational. 
Conversely, assume $\rank(T(M_\sigma))=2$ and $B'$ is rational, 
then there exist $a,b \in \R$ and $v_1,v_2 \in T(M_\sigma)$ such that $\sigma= a v_1 + \sqrt{-1}b v_2$, 
and $\varphi'$ is of the form of the equation (\ref{phi'}). 
Since $B'$ is rational, $v_1+B'\wedge v_1, v_2+B'\wedge v_2 \in H^*(M,\Q)$ and hence $\rank(L_{\varphi'})=2$. 
\end{proof}


\subsection{K\"ahler rigidity} \label{Kahler rigidity computation}
A K\"ahler rigidity comparable with a complex rigidity naturally appears in the framework of generalized Calabi--Yau structures. 

Here is a key observation. 
Let $S$ be a K3 surface with Picard number $1$. 
We write $NS(S)=\Z H$ with $H^2=2n>0$, and consider 
$$
v_1:=(1,0,-n), \ \ v_2:=(0,H,0) \in NS'(S),
$$
where the notation follows the decomposition $NS'(S)=H^0(S,\Z) \oplus NS(S)\oplus H^4(S,\Z)$. 
Then 
\begin{align}
e^{\sqrt{-1}H} &= (1,\sqrt{-1}H,-n) \notag \\
&=v_1+\sqrt{-1}v_2 \in (\Z v_1 + \Z v_2)_\C \subsetneq NS'(S)_\C.  \notag 
\end{align}
Here 
$$
\Z v_1 + \Z v_2 \cong \langle 2n \rangle^{\oplus 2},
$$ 
where $\langle k \rangle$ denotes the lattice of rank $1$ generated by $v$ with $v^2=k$. 
On the other hand, for $\epsilon^2 \notin \Q$, 
\begin{align}
e^{\sqrt{-1}\epsilon H} &=(1,\sqrt{-1} \epsilon H,-\epsilon^2n) \notag \\
&= (1,0,-\epsilon^2n) + \sqrt{-1}\epsilon(0, H, 0) \notag \\
&= (1,0,0) -\epsilon^2 (0,0,n) + \sqrt{-1}\epsilon(0, H, 0) \in NS'(S)_\C. \notag 
\end{align}
Hence there is no proper sublattice $L \subsetneq NS'(S)$ such that $e^{\sqrt{-1}\epsilon H} \in L_\C$.  
Therefore the K\"ahler structure $H$ is not continuously deformable in such a way that $\rank(L_{e^{\sqrt{-1}\epsilon H}})=2$.    
This new sublattice $L_{e^{B+\sqrt{-1}\omega}}$ is comparable with the transcendental lattice, which is the minimal sublattice $L \subset H^2(M_\sigma,\Z)$ such that $\sigma \in L_\C$.

\begin{Def}
A symplectic manifold $(M,\omega)$ is called symplectic rigid if $\omega^2 \in H^4(M,\Q)$. 
\end{Def}

As the previous calculation shows, $(M,\omega)$ is symplectic rigid if and only if $\rank(L_{e^{\sqrt{-1}\omega}})=2$.  

\begin{Def}
A generalized K3 surface $X=(\varphi,\varphi')$ is called K\"ahler rigid if $\varphi$ is of type $A$ and $\rank (\widetilde{T}(X))=22$.
\end{Def}

Again, the condition $\rank (\widetilde{T}(X))=22$ is equivalent to $\rank(L_{\varphi})=2$.  
We will next provide a characterization of the K\"ahler rigidity.  
Recall that, in contrast to the complex rigidity, the hyperK\"ahler structure $\varphi'$ can be either of type $A$ or $B$.  

\begin{Thm} 
A K\"ahler rigid generalized K3 surface is of the form $X=(\lambda e^{B+\sqrt{-1}\omega}, \varphi')$, where $B \in H^2(M,\Q)$ and $\omega^2 \in H^4(M,\Q)$.  
In other words, $X$ is a rational $B$-field $B$ shift of a symplectic rigid manifold equipped with a hyperK\"ahler structure.    
\end{Thm}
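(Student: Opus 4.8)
The plan is to make the two defining conditions of K\"ahler rigidity fully explicit on the positive plane $P_{[\varphi_A]}$ and then to decouple them using positivity of the symplectic volume. First I would record the shape forced by the hypotheses. Since $\varphi_A$ is of type $A$, Proposition \ref{type A and B} gives $\varphi_A=\lambda e^{B+\sqrt{-1}\omega}$ with $\lambda=(\varphi_A)_0\ne 0$, a symplectic form $\omega$ and a $B$-field $B$; here $(B,\omega)$ is the real/imaginary decomposition of the degree $2$ part of $\varphi_A/\lambda$, and is therefore determined by $\varphi_A$. Writing $\psi=e^{B+\sqrt{-1}\omega}$, the plane $P_{[\varphi_A]}$ is unchanged by the nonzero scalar $\lambda$, so it is spanned by $[\re\psi]$ and $[\im\psi]$, where
$$
\re\psi = 1 + B + \frac{1}{2}(B^2-\omega^2), \qquad \im\psi=\omega+B\wedge\omega .
$$
On the lattice side, $\widetilde T(X)=L_{\varphi_A}^\perp$ inside the non-degenerate rank $24$ Mukai lattice, so $\rank(\widetilde T(X))=24-\rank(L_{\varphi_A})$, and condition (2) is equivalent to $\rank(L_{\varphi_A})=2$. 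As $P_{[\varphi_A]}$ is a positive $2$-plane sitting in $(L_{\varphi_A})_\R$, this is the minimal possible rank, and it says exactly that $P_{[\varphi_A]}$ is a rational plane.

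Next I would extract the arithmetic consequences of rationality. Because $\re\psi$ and $\im\psi$ have degree $0$ parts $1$ and $0$, a rational basis of $P_{[\varphi_A]}$ must consist of a rational multiple $\mu\,\im\psi\in H^*(M,\Q)$ (spanning the line in $P_{[\varphi_A]}$ on which the degree $0$ part vanishes) together with a rational vector $\re\psi+b\,\im\psi\in H^*(M,\Q)$ of nonzero degree $0$ part; conversely these two conditions produce a rational basis, so they are equivalent to rationality of the plane. Reading off degree $2$, the first forces $\omega_0:=\mu\omega\in H^2(M,\Q)$, so $\omega$ is a real multiple of a rational class, while the second forces $B+t\omega_0\in H^2(M,\Q)$, where $t:=b/\mu$. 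Set $B_1:=B+t\omega_0$ and $q_0:=\int_M\omega_0^2$.

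The crux, and the main obstacle, is that rationality of the plane does not by itself make $B$ and $\omega$ individually rational: $\omega$ is typically irrational and only $\omega^2$ is constrained, so the two conditions above are genuinely coupled through the unknown real parameters $t,\mu$. The only available leverage is positivity. Since $\omega$ is symplectic, $\int_M\omega^2\ne 0$, hence $q_0=\mu^2\int_M\omega^2\ne 0$. The degree $4$ part of $\mu\,\im\psi\in H^*(M,\Q)$ reads $\int_M B\wedge\omega_0\in\Q$, which upon substituting $B=B_1-t\omega_0$ reduces to $t\,q_0\in\Q$; since $q_0\in\Q\setminus\{0\}$ this forces $t\in\Q$, and therefore $B=B_1-t\omega_0\in H^2(M,\Q)$, giving (1). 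Feeding $t\in\Q$ and $B\in H^2(M,\Q)$ into the degree $4$ part of $\re\psi+b\,\im\psi\in H^*(M,\Q)$ then leaves the single relation $\int_M\omega^2=q_0/\mu^2\in\Q$, that is $\omega^2\in H^4(M,\Q)$, which is (2).

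Finally, the reformulation is immediate from $B\in H^2(M,\Q)$: we have $\varphi_A=e^{B}(\lambda e^{\sqrt{-1}\omega})$, so by Remark \ref{Rem B-shifts}(2) the identity $X=e^{B}(\lambda e^{\sqrt{-1}\omega},e^{-B}\varphi_B)$ exhibits $X$ as the rational $B$-field shift of the generalized K3 surface $(\lambda e^{\sqrt{-1}\omega},e^{-B}\varphi_B)$, whose underlying symplectic manifold $(M,\omega)$ is K\"ahler rigid since $\omega^2\in H^4(M,\Q)$.
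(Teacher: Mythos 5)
Your proof is correct, and its overall reduction is the same as the paper's: both translate K\"ahler rigidity, via $\widetilde{T}(X)=L_{\varphi_A}^\perp$ and the rank count $\rank(\widetilde{T}(X))=24-\rank(L_{\varphi_A})$ in the nondegenerate Mukai lattice, into the statement that the positive plane spanned by $[\re\psi]$ and $[\im\psi]$, $\psi=e^{B+\sqrt{-1}\omega}$, is rational. The difference lies in what actually gets argued: the paper's proof simply asserts ``First, $B$ needs to be rational, and hence so is $\omega^2$,'' and this assertion is precisely the nontrivial point, since rationality of the plane directly yields only $\mu\omega\in H^2(M,\Q)$ and rationality of $B$ modulo $\R\omega$, leaving $B$ and $\omega$ coupled through the unknown reals $t,\mu$. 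Your decoupling step --- reading off the degree-$4$ component of the rational vector $\mu\,\im\psi$, substituting $B=B_1-t\omega_0$, and using $\int_M\omega^2\ne 0$ to force $t\in\Q$ --- is a genuine argument supplied where the paper has none, and it is the right one; the subsequent degree-$4$ computation for $\re\psi+b\,\im\psi$ then correctly delivers $\omega^2\in H^4(M,\Q)$. Two remarks. First, you prove only the forward implication, which is all the theorem literally claims; the paper's proof additionally sketches the converse by exhibiting the rank-$2$ lattice when $\omega=\kappa H$ with $\kappa^2\in\Q$ and $H$ integral (note in passing that the paper's $n\,\im(e^{B+\sqrt{-1}\kappa H})$ is integral only after dividing by $\kappa$, since $\im\psi=\kappa(H+B\wedge H)$). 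Second, your argument in fact establishes the stronger necessary condition, implicit in the paper's $\omega=\kappa H$, that $\omega$ is a real multiple of a rational class with $\mu^2=q_0/\int_M\omega^2\in\Q$; this extra condition matters, because $B\in H^2(M,\Q)$ and $\omega^2\in H^4(M,\Q)$ alone do not make the plane rational (e.g.\ $\omega=e+f+\tfrac{1}{\sqrt{2}}d$ with $e,f$ a hyperbolic basis and $d^2=-2$ has $\omega^2=1$ but is not proportional to a rational class), so recording it, as you implicitly do, is what makes the ``in other words'' reformulation and any converse statement go through.
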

\begin{proof}
Let $X=(e^{B+\sqrt{-1}\omega},\varphi')$ be K\"ahler rigid.  
We consider an existence condition of a rank $2$ sublattice $L \subset H^*(M,\Z)$ such that 
$$
e^{B+\sqrt{-1}\omega}=1 + B + \frac{1}{2}(B^2-\omega^2)+\sqrt{-1}(\omega+ B \wedge \omega) \in L_\C. 
$$
First, $B$ needs to be rational, and hence so is $\omega^2$. 
Then we may write the symplectic class $\omega=\kappa H$ for $\kappa^2 \in \Q$ and $H \in H^2(M,\Z)$ with $H^2>0$. 
In fact, in this case, there exist $m,n \in \N$ such that 
$$
m \mathrm{Re} (e^{B+\sqrt{-1}\kappa H}), \ n \mathrm{Im} (e^{B+\sqrt{-1}\kappa H})  \in H^*(M,\Z). 
$$
Then the complexification $L_\C$ of the lattice 
$$
L=\Z m \mathrm{Re} (e^{B+\sqrt{-1}\kappa H}) + \Z n \mathrm{Im} (e^{B+\sqrt{-1}\kappa H}) \subset H^*(M,\Z)
$$
contains $e^{B+\sqrt{-1}\kappa H}$. 
\end{proof}


\section{Mirror symmetry for generalized K3 surfaces} \label{MS for gK3}

Our formulation of mirror symmetry for generalized K3 surfaces builds upon the combination of two novel ideas: 
(1) lattice polarizations of the cycles (Dolgachev \cite{Dol}) and (2) generalized K3 structures (Hitchin \cite{Hit} and Huybrechts \cite{Huy}).


\subsection{Mirror symmetry for K3 surfaces} \label{MS ala Dolgachev}
We begin with a brief summary of mirror symmetry for lattice polarized K3 surfaces originally introduced by Dolgachev in \cite{Dol}. 
Fix an integer $1 \le \rho \le 19$. 

\begin{Def}
Let $K$ be a lattice of signature $(1,\rho-1)$.  
A $K$-polarized K3 surface is a pair $(S,i)$ of a K3 surface $S$ and a primitive lattice embedding $i:K \hookrightarrow NS(S)$ such that $i(K)$ contains an ample class. 
\end{Def}

Two $K$-polarized K3 surfaces $(S,i)$ and $(S',i')$ are called isomorphic if there is an isomorphism $f:S\rightarrow S'$ of K3 surfaces such that $f^*\circ i'=i$. 

\begin{Def} \label{DolDef}
For a lattice $K$ of signature $(1,\rho-1)$, assume there exists a lattice $L$ such that 
$$
K^\perp=L \oplus U.
$$  
Then the family $\mathcal{S}$ of $K$-polarized K3 surfaces and the family $\mathcal{S}^\vee$ of $L$-polarized K3 surfaces are mirror symmetric. 
\end{Def}

In the above setting, $L$ is of signature $(1,19-\rho)$ and uniquely determined with $L^\perp=K\oplus U$ (independent of the chose of a decomposition and an embedding) \cite{Dol}.  
Then, for a generic $K$-polarized K3 surface $S$ and a generic $L$-polarized K3 surface $S^\vee$, we have 
$$
NS'(S) \cong K \oplus U \cong T(S^\vee), \ \ \ 
T(S) \cong L \oplus U \cong NS'(S^\vee). 
$$
They are considered as a duality between the algebraic cycles of $S$ and the transcendental cycles of $S^\vee$ as lattices, and vice versa (duality of Yukawa couplings). 
Mirror symmetry can also be explored at the level of moduli spaces. 
See for example \cite{Dol, FKY, HK2}.

The crucial assumption in Dolgachev's formulation is the existence of a decomposition of the form $K^\perp=L \oplus U$,  
which reflects the decomposition 
$$
NS'(S) = NS(S) \oplus \big(H^0(S,\Z)\oplus H^4(S,\Z)\big)
$$
under mirror symmetry. 
Hence the assumption is indispensable as long as we consider geometry where the point class and fundamental class play special roles. 

\begin{Rem}
It is also worth noting that there are geometric explanations for such a decomposition; the hyperbolic lattice $U$ corresponds to: 
\begin{enumerate}
\item a cusp (large complex structure limit) of the Baily--Borel compactification of the period domains \cite{Sca},  
\item the lattice spanned by the fiber and section classes of a special Lagrangian fibration in SYZ mirror symmetry. 
\end{enumerate}
Therefore, a choice of such $U$ corresponds to a choice of a mirror K3 surface. 
\end{Rem}


\subsection{Problems} \label{MS problem}

Although Dolgachev's formulation works well in many cases, it has a few issues and cannot be considered definitive. 
First of all, $NS'(S)$ and $T(S)$ are not truly symmetric; 
the minimal rank of $NS'(S)$ is $3$ (as long as $S$ is algebraic) while the minimal rank of $T(S)$ is $2$. 
Additionally, the assumption $K^\perp=L \oplus U$ does not hold in general. 
These issues fall into two categories. 
\begin{enumerate}
\item There are decompositions of the form $K^\perp=L \oplus U(m)$ but for $m \ge 2$. 
Examples of such K3 surfaces are constructed for examples in \cite{HK}. 
The duality between the algebraic cycles of a $K$-polarized K3 surface and the transcendental cycles of an $L$-polarized K3 surface holds only over $\Q$.   
\item The situation is worse if $K^\perp$ is of signature $(2,0)$ and hence cannot be of the form $U(m)$ for any $m \ge 1$. 
Such a lattice $K$ is given by $K=NS(S)$ for a singular K3 surface $S$. 
Therefore, the natural question ``How can we understand mirror symmetry for singular K3 surfaces?" has been a mysterious problem for a long time. 
Indeed, for a singular K3 surface, the K\"ahler moduli space is of dimension 20 and complex moduli space is of dimension 0, and there seems no mirror partner as a K3 surface.   
\begin{table}[htb]
\begin{center}
  \begin{tabular}{c|c|c}
    & singular K3 surface&   \ \ \  \ \ \ \ \ \ mirror ?? \ \ \  \ \ \ \ \ \   \\ \hline 
\ K\"ahler \ & 20-dim  &  0-dim \\ \hline
\ complex \ & 0-dim &  20-dim  
  \end{tabular}
  \end{center}
\end{table}  
\end{enumerate}

We will settle these problems in the framework of generalized K3 surfaces in Sections \ref{MS_B} and \ref{MS for sing K3}. 

More generally, the existence of rigid Calabi--Yau manifolds poses a serious challenge to mirror symmetry. 
Various approaches have been proposed to tackle this issue, but there seem currently no universally agreed-upon formulation. 
See for example \cite{BB, CDP, Leu, SS}.    


\subsection{Mirror symmetry for generalized K3 surfaces}

We will first introduce the notion of lattice polarizations of the Mukai lattice. 
For integers $\kappa,\lambda \ge 2$ such that $\kappa+\lambda=24$, 
let $K$ and $L$ be even lattices of signature $(2,\kappa-2)$ and $(2,\lambda-2)$ respectively. 

\begin{Def} \label{Mukai lattice pol}
A $(K, L)$-polarized generalized K3 surface is a pair $(X,i)$ of a generalized K3 surface $X=(\varphi,\varphi')$ 
and a lattice embedding $i: K \oplus L \hookrightarrow H^*(M,\Z)$
such that 
\begin{enumerate}
\item the restrictions $i|_K$ and $i|_L$ are primitive embeddings, 
\item $i(K) \subset \widetilde{NS}(X)$ and $i(L) \subset \widetilde{T}(X)$. 
\end{enumerate}
\end{Def}

Two $(K, L)$-polarized generalized K3 surfaces $(X,i)$ and $(Y,j)$ are called isomorphic 
if there is a diffeomorphism $f \in \Diff_*(M)$ such that $X=f^*Y$ and $f^*\circ j=i$. 
For simplicity, we usually omit $i$ from the notation $(X,i)$ when there is no confusion.


\begin{Def} \label{MS for (K,L)-pol gK3s}
A family $\mathcal{X}$ of $(K,L)$-polarized generalized K3 surfaces and a family $\mathcal{Y}$ of $(L, K)$-polarized generalized K3 surfaces are mirror symmetric.
\end{Def}

As mentioned in Remark \ref{no longer algebraic}, in general, the elements in $H^0(M,\Z)$ and $H^4(M,\Z)$ are no longer ``algebraic" 
and there is no natural summand $U \cong H^0(M,\Z) \oplus H^4(M,\Z)$ in $\widetilde{NS}(X)$. 
Therefore, there is no assumption on the lattices $K$ and $L$ in our formulation. 

There may be several families of $(K,L)$-polarized generalized K3 surfaces, and we typically select one for the purpose of mirror symmetry.
This remark is particularly important due to the symmetry of Definition \ref{Mukai lattice pol}. 
Specifically, if $(\varphi,\varphi')$ is a $(K, L)$-polarized generalized K3 surface, then $(\varphi',\varphi)$ is a $(L,K)$-polarized generalized K3 surface in a natural way.  
Nevertheless, as elucidated in Section \ref{MS_B} and Section \ref{MS for sing K3}, we may obtain a non-trivial result by selecting appropriate families. 
This issue is also related to the so-called multiple mirrors phenomenon. 

\begin{Rem} \label{AM SCFT}
Our formulation is compatible with Aspinwall--Morrison's description of mirror symmetry for K3 surfaces from the viewpoint of SCFTs \cite{AM}.  
They  identified the moduli space of $N=(2,2)$ SCFTs on a K3 surface with 
the Grassmannian
$$
\mathrm{Gr}_{2,2}^{po}(H^*(M,\R)) \cong \mathrm{O}(4,20)/(\mathrm{SO}(2)\times \mathrm{SO}(2)\times \mathrm{O}(20)) 
$$
parametrizing the orthogonal pairs of positive oriented 2-planes in $H^*(M,\R) \cong \R^{4,20}$.    
To a K3 surface $M_\sigma$ with a K\"ahler class $\omega$, we may associate 
$$
(P_\sigma, P_{e^{\sqrt{-1}\omega}}) \in \mathrm{Gr}_{2,2}^{po}(H^*(M,\R)). 
$$ 
There are ``non-geometric points" that cannot be obtained in this way. 
However, we now understand that these points are given by generalized K3 surfaces, as stated in Theorem \ref{Torelli gK3}. 

Then mirror symmetry is an involution of $\mathrm{Gr}_{2,2}^{po}(H^*(M,\R))$ given by swapping the two 2-planes, which does not preserve the geometric points. 
Therefore, it is understood that a mirror partner of a K3 surface is not necessarily a K3 surface, but rather a generalized  K3 surface. 
It is also explained that choosing $U$ in $H^*(M,\Z)$ amounts to selecting a specific $N=(2,2)$ SCFT to represent an $N=(4,4)$ SCFT. 
\end{Rem}

\begin{Def}
Let  $X=(\varphi,\varphi')$ be a $(K, L)$-polarized generalized K3 surface. 
A deformation of $\C \varphi$ keeping $X$ as a $(K, L)$-polarized generalized K3 surface is called an $A$-deformation.    
The $A$-moduli space $\mathfrak{M}_A$ is defined as the space of $A$-deformations of $X$.   
A $B$-deformation and the $B$-moduli space $\mathfrak{M}_B$ are defined similarly. 
\end{Def}

We will provide an explicit description of the moduli space $\mathfrak{M}_A \times \mathfrak{M}_B$ of the $(K, L)$-polarized generalized K3 surfaces. 
We begin by introducing
$$
\DD_K :=\{ \varphi \in \mathbb{P}(K_\C) \ | \ \langle \varphi,\varphi \rangle=0, \langle \varphi, \overline{\varphi} \rangle >0 \}, 
$$
a complex manifold of dimension $\kappa-2$, isomorphic to the Grassmannian $\Gr^{po}_{2}(K_\R) \cong \OO(2,\kappa-2)/(\SO(2)\times \OO(\kappa-2))$ 
parametrizing the positive oriented 2-planes in $K_\R \cong \R^{2,\kappa-2}$.  
It has two connected components, each of which is isomorphic to a symmetric domain of type IV. 
$\DD_L$ is defined in a similar manner.

\begin{Thm} \label{moduli and dim theorem}
The moduli space $\mathfrak{M}_A \times \mathfrak{M}_B$ of the $(K, L)$-polarized generalized K3 surfaces 
is identified with an open dense subset of $\DD_{(K,L)}:= \DD_K \times \DD_L$.  
Moreover, $\mathfrak{M}_A$ is identified with an open dense subset of $\DD_K$, and similarly, $\mathfrak{M}_B$ is identified with an open dense subset of $\DD_K$.  
\end{Thm}

\begin{proof}
Let $(X=(\varphi,\varphi'),i)$ be a $(K,L)$-polarized generalized K3 surface. 
Then the conditions $i(K) \subset \widetilde{NS}(X)$ and $i(L) \subset \widetilde{T}(X)$ are equivalent to $\varphi' \in \DD_{i(L)}$ and $\varphi \in \DD_{i(K)}$, respectively. 
We will demonstrate that a point in an open dense subset of $\DD_{i(K)} \times \DD_{i(L)}$ defines a unique generalized K3 surface.
Firstly, we associate to $(\alpha,\beta) \in \DD_{i(K)} \times \DD_{i(L)}$ an orthogonal pair of positive oriented 2-planes $(P_{\alpha},P_{\beta}) \in \Gr^{po}_{2,2}(H^*(M,\R))$,  
which are generically not orthogonal to any $(-2)$-class. 
Therefore, by Theorem \ref{Torelli gK3}, the period map $\mathfrak{per}_{\gKt}: \mathfrak{M}_{\gKt} \rightarrow \Gr^{po}_{2,2}(H^*(M,\R))$ ensures that, 
generically, $(\alpha,\beta) \in \DD_{i(K)} \times \DD_{i(L)}$ defines a unique generalized K3 surface. 
Consequently, the moduli space $\mathfrak{M}_A \times \mathfrak{M}_B$ may be identified with an open dense subset of $\DD_{i(K)} \times  \DD_{i(L)} \cong \DD_{(K,L)}$. 

Let $H:=(H^2(M,\C)\oplus H^4(M,\Z))\cap i(K)$.  
Then, by Theorem \ref{Torelli theorem} and the above discussion, $\DD_{i(K)} \setminus \PP(H)$ parametrizes 
the $A$-deformation of $(\varphi,\varphi')$ for a generic choice of $\varphi' \in \DD_{i(L)}$. 
Therefore, $\mathfrak{M}_A$ is identified with $\DD_{i(K)} \setminus \PP(H)$. 
A parallel discussion can be carried out for $\mathfrak{M}_B$. 
\end{proof}


\subsection{Comparison: classical and new} \label{classical and new}
Next, we will demonstrate that our formulation of mirror symmetry inherently encompasses Dolgachev's formulation. 

For an integer $1 \le \rho \le 19$, let $K'$ be a lattice of signature $(1,\rho-1)$. 
Assume that there exists a decomposition $K'^\perp=L' \oplus U$ for a lattice $L'$.  
Mirror symmetry for lattice polarized K3 surfaces posits that the $K'$-polarized K3 surfaces and the $L'$-polarized K3 surfaces are mirror symmetric to each other.  

Let $M_\sigma$ be a generic $K'$-polarized K3 surface equipped with a $B$-field $B$ shift of a hyperK\"ahler form $\omega$ 
($B, \omega \in NS(M_\sigma)_\R$ are also taken generically). 
It defines a generalized K3 surface $X=(\varphi,\varphi')=(e^{B+\sqrt{-1}\omega}, \sigma)$.  
By the calculation in Example \ref{NS and T example}, 
\begin{align*}
\widetilde{NS}(X) = NS'(M_\sigma)  \cong K' \oplus U, \ \ \ 
\widetilde{T}(X) = T(M_\sigma) \cong L' \oplus U. 
\end{align*}
Similarly, let $M_{\sigma^\vee}$ be a generic $L'$-polarized K3 surface equipped with a $B$-field $B^\vee$ shift of a hyperK\"ahler form $\omega^\vee$. 
It defines a generalized K3 surface $X^\vee=(\varphi,\varphi')=(e^{B^\vee+\sqrt{-1}\omega^\vee}, \sigma^\vee)$ and 
\begin{align*}
\widetilde{NS}(X^\vee) = NS'(M_{\sigma^\vee}) \cong L' \oplus U, \ \ \ 
\widetilde{T}(X^\vee) = T(M_{\sigma^\vee}) \cong K' \oplus U. 
\end{align*}
By setting $K:=K'\oplus U$ and $L:=L'\oplus U$, 
we may think that $X$ is a $(K,L)$-polarized generalized K3 surface and $X^\vee$ is an $(L,K)$-polarized generalized K3 surface. 
Therefore the formulation of mirror symmetry for lattice polarized K3 surfaces is naturally regarded as a special case of ours.


\subsection{Mirror symmetry and $B$-field twists}  \label{MS_B}
Our formulation of mirror symmetry comes into its own when the conventional approach falls short. 
Let us delve into the first issue outlined in Section \ref{MS problem}. 

For an integer $1 \le \rho \le 19$, let $K'$ be a lattice of signature $(1,\rho-1)$. 
Assume that there exists a decomposition $K'^\perp=L' \oplus U(m)$ for a lattice $L'$ and $m \ge 2$.   
We define $K:=K'\oplus U(m)$ and $L:=L'\oplus U(m)$. 

A $(K,L)$-polarized generalized K3 surface can be constructed as follows. 
First we consider $K'$-polarized K3 surface $M_{\sigma}$ so that generically $T(M_{\sigma})\cong L' \oplus U(m)$. 
Let $e,f$ be the standard basis of this $U(m)$, i.e. $e^2=f^2=0$ and $\langle e,f \rangle =m$, and consider a $B$-field $B'=\frac{1}{m}e \in T(M_{\sigma})_\Q$. 
Then, for $B, \omega \in NS(M_\sigma)_\R$, 
$$
X=e^{B'}(e^{B+\sqrt{-1}\omega}, \sigma)
$$
is a $(K,L)$-polarized generalized K3 surface. 
Indeed, for generic $X$, by the calculation in Example \ref{NS and T example}, 
\begin{align*}
\widetilde{NS}(X) & = NS(M_\sigma) \oplus \Z(m,e,0)\oplus H^4(M_\sigma,\Z) \\
& \cong  K' \oplus U(m)
\end{align*}
and by the choice of $B'$, 
\begin{align*}
\widetilde{T}(X) & = e^{B'}T(M_\sigma)_\R \cap H^*(M_\sigma,\Z) \\
& = L' \oplus \Z(0,e,0) \oplus \Z (0,f,1) \\
& \cong L' \oplus U(m). 
\end{align*}
Note that the elements in $H^0(M,\Z)$ are no longer algebraic in this case. 
An $(L,K)$-polarized K3 surface can be constructed in the same manner. 

\begin{Rem} \label{twisted sheaves}
A $B$-field shift of a complex structure $\sigma$ is thought of as a non-commutative deformation of a K3 surface $M_\sigma$ 
where the geometric objects are twisted by the Brauer group element $\alpha$ associated with the $B$-field. 
The corresponding category is the derived category of twisted coherent sheaves $\mathrm{D}^b(M_\sigma,\alpha)$. 
See for example \cite{Huy}. 
\end{Rem}


\subsection{Mirror symmetry for singular K3 surfaces}  \label{MS for sing K3}
Let us next consider the second problem in Section \ref{MS problem}, 
namely the problem of mirror symmetry for singular K3 surfaces. 

For an integer $n>0$, we define 
$$
K:=\langle -2n \rangle^{\oplus 2} \oplus U^{\oplus 2} \oplus E_8^{\oplus 2}, \ \ \ L:=\langle 2n \rangle^{\oplus 2}. 
$$ 

\begin{enumerate}
\item A family $\mathcal{X}$ of the $(K, L)$-polarized generalized K3 surfaces is given by $\{X=(e^{B +\sqrt{-1}\omega}, \sigma)\}$, 
where $T(M_\sigma)=L$ and $B, \omega \in NS(M_\sigma)_\R$. 
Note that $M_\sigma$ is realized as a $(\langle -2n \rangle^{\oplus 2} \oplus U \oplus E_8^{\oplus 2})$-polarized K3 surface. 
For generic $X$, we have 
$$
K \cong \widetilde{NS}(X)=NS'(M_\sigma), \ \ \ L \cong \widetilde{T}(X)=T(M_\sigma)
$$

\item A family $\mathcal{Y}$ of the $(L, K)$-polarized generalized K3 surfaces contains a 19-dimensional subfamily of the K3 surfaces 
of the form $\{Y=(e^{\sqrt{-1}H}, \sigma^\vee)\}$, where $NS(M_{\sigma^\vee})=\Z H$ with $H^2=2n>0$. 
There is no K\"ahler deformation as explained in Section \ref{Kahler rigidity computation}.  
Indeed, for generic $X^\vee$, we have 
$$
L \cong \widetilde{NS}(Y) \subsetneq NS'(M_{\sigma^\vee}), \ \ \ K \cong \widetilde{T}(Y) \supsetneq T(M_{\sigma^\vee}). 
$$
The point is that $\sigma^\vee$ admits additional $1$-dimensional $B$-deformation as a $(L, K)$-polarized generalized K3 surface. 
\end{enumerate}

We therefore obtain two $20$-dimensional families of generalized K3 surfaces 
whose $A$-moduli space and $B$-moduli space are exchanged as expected (cf. Theorem \ref{moduli and dim theorem}).   
\begin{table}[htb]
\begin{center}
  \begin{tabular}{c|c|c}
    & \ $(K, L)$-polarization \ & \  $(L, K)$-polarization \   \\ \hline 
 $\mathfrak{M}_A$   & 20-dim   &   0-dim  \\ \hline
 $\mathfrak{M}_B$  & 0-dim   &   20-dim
  \end{tabular}
  \end{center}
\end{table}

The above discussion is readily generalized to any singular K3 surfaces. 
Let $S$ be a singular K3 surface such that $NS(S)=K'$ and $T(S)=L$, and set $K=K'\oplus U$. 
Then the mirror is given by a family $\mathcal{Y}$ of $(L,K)$-polarized generalized K3 surfaces.  
This family may include a K3 surface with $L_{e^{B+\sqrt{-1}\omega}} \cong L$, 
although it is normally very hard to construct such a K3 surface explicitly.  

In summary, the mirror partner of a singular K3 surfaces is a K\"ahler rigid generalized K3 surface, which is generally not a classical K3 surface. 
This solution addresses the problem of mirror symmetry for singular K3 surfaces.

\begin{Rem}
The initial motivation for the present article stems from the attractor mechanisms of the complex and K\"ahler moduli spaces of Calabi--Yau 3-folds \cite{FK}. 
The connection arises from a Calabi--Yau 3-fold formed by the product of an elliptic curve and a K3 surface. 
Then the complex (resp. K\"ahler) attractor mechanism specifies a set of isolated points in the moduli space, called the constellation, 
and they correspond to the complex (resp. symplectic) rigid K3 surfaces. 
For further elaboration, refer to \cite{FK}. 
\end{Rem}


\subsection{Comments on relevant works}  \label{comments}

A generalized Calabi--Yau variety is a Fano variety $X$ whose derived category $\mathrm{D}^b(X)$ admits a semi-orthogonal decomposition, 
with one component being the Calabi--Yau category $\mathcal{A}_X$. 
This category provides a deformation of the derived category of a Calabi--Yau manifold of lower dimension. 
In this sense, $\mathcal{A}_X$ is thought to be a non-commutative Calabi--Yau manifold. 

Another notion of a non-commutative Calabi--Yau manifold arises from a certain algebra introduced in \cite{Kan}; 
Here, the quotient category $\mathrm{qgr}(A/(f)) := \mathrm{gr}(A/(f))/\mathrm{tor}(A/(f))$ is considered,   
where $A$ is a specific choice of skew polynomial algebra and $f$ is a Fermat type element in the center of $A$. 
This construction yields a non-commutative projective scheme in the sense of Artin and Zhang \cite{AZ}. 

In dimension 2, there are thus at least three notions of non-commutative K3 surfaces: 
$\mathcal{A}_X$ for a Fano 4-fold $X$, the quotient category $\mathrm{qgr}(A/(f))$ of modules of a certain algebra, 
and  the derived category $\mathrm{D}^b(S,\alpha)$ of twisted coherent sheaves on a K3 surface $S$ (Remark \ref{twisted sheaves}). 
The relation between $\mathcal{A}_X$ and $\mathrm{D}^b(S,\alpha)$ has recently been elucidated, with a certain correspondence provided in \cite{AT, Huy2}. 
However, there exist instances of $\mathcal{A}_X$ that do not correspond to $\mathrm{D}^b(S,\alpha)$, and similarly, generalized Calabi--Yau structures that do not correspond to $\mathcal{A}_X$.  
The connection between the quotient category $\mathrm{qgr}(A/(f))$ and the other two cases remains unclear at this point. 

\begin{Rem} 
The author has been informed by Belmans that the methods in \cite{BBG} should generalize to higher dimensions, 
and show that the derived category $\mathrm{D}^b(\mathrm{qgr}(A/(f))$ of modules of the non-commutative K3 surfaces arising in \cite{Kan} is of the form $\mathrm{D}^b(S,\alpha)$. 
This work is  currently in progress by the authors of \cite{BBG}.
\end{Rem}

In \cite{SS} Sheridan and Smith proved, among other results, homological mirror symmetry for certain pairs of a rigid Calabi--Yau manifold and a generalized Calabi--Yau variety. 
They in particular discussed homological mirror symmetry for a rigid K3 surface and a cubic fourfold $X$.  
The corresponding $\mathcal{A}_X$ is considered ``non-geometric" and does not fall within the aforementioned correspondence. 
Nevertheless it should correspond to a generalized Calabi--Yau structure and align with our result. 
It is worth noting that our formulation is intrinsic and applicable to all singular K3 surfaces, 
whereas \cite{SS} focuses on (generalized) Calabi--Yau varieties obtained  through the Batyrev-Borisov construction (in all dimensions).


\par\noindent{\scshape \small
Department of Mathematics \\
Faculty of Science and Engineering\\
Waseda University \\
3-4-1 Okubo, Shinjuku, Tokyo, 169-8555, Japan}
\par\noindent{\ttfamily  a\_kanazawa@waseda.jp}


\begin{thebibliography}{99}
\bibitem{AT}N. Addington and R. Thomas, Hodge theory and derived categories of cubic fourfolds, Duke Math. J. 163, no. 10 (2014), 1885-1927. 
\bibitem{AZ}M. Artin and J. J. Zhang, Noncommutative Projective Schemes, Adv. Math. 109 (1994), no. 2, 228-287.
\bibitem{AM}P. Aspinwall and D. Morrison, String theory on K3 surfaces, Mirror symmetry, II, AMS/IP Stud. Adv. Math. 1 (1997), 703-716.
\bibitem{BB}V. Batyrev and L.Borisov, Dual cones and mirror symmetry for generalized Calabi--Yau manifolds, Mirror symmetry, II, AMS/IP Stud. Adv. Math., vol. 1, Amer. Math. Soc., Providence, RI, 1997, pp. 71-86. 
\bibitem{BBG}T. Baumann, P. Belmans and O. van Garderen, Central curves on noncommutative surfaces, arXiv:2410.07620
\bibitem{CDP} P. Candelas, E. Derrick, and L. Parkes, Generalized Calabi--Yau manifolds and the mirror of a rigid manifold, Nuclear Phys. B 407 (1993), no. 1, 115-154. 
\bibitem{Dol}I. Dolgachev, Mirror symmetry for lattice polarized K3 surfaces, J. Math. Sci. 81 (1996), no. 3, 2599-2630, Algebraic Geometry, 4.
\bibitem{FKY} Y.-W. Fan, A. Kanazawa and S.-T. Yau, Weil--Petersson geometry on the space of Bridgeland stability conditions, Comm. Anal. Geom. Vol. 29, No. 3, (2021) 681-706.
\bibitem{FK}Y.-W. Fan and A. Kanazawa, Attractor mechanisms of moduli spaces of Calabi--Yau 3-folds, J. Geom. Phys. Vol. 185 (2023) 104724. 
\bibitem{HK}K. Hashimoto and A. Kanazawa, Calabi--Yau threefolds of type K (II): Mirror Symmetry, Commun. Number Theory Phys. 10 (2) (2016) 157-192. 
\bibitem{HK2}S. Hosono and A. Kanazawa, BCOV cusp forms of lattice polarized K3 surfaces, Advances in Mathematics, Vol. 434 (2023) 109328. 
\bibitem{Hit}N. Hitchin, Generalized Calabi--Yau manifolds, Quart. J. Math. Oxford Ser. 54: 281-308, 2003. 
\bibitem{Huy}D. Huybrechts, Generalized Calabi--Yau structures, K3 surfaces, and $B$-fields, Int. J. Math. 16 (2005), 13-36. 
\bibitem{Huy2}D. Huybrechts, The K3 category of a cubic fourfold, Compos. Math. Vol. 153(3), 2017, 586-620. 
\bibitem{Kan}A. Kanazawa, Non-commutative projective Calabi--Yau schemes, JPAA, 2015; 219 (7) 2771-2780. 
\bibitem{Leu}C. Leung, Geometric Aspects of Mirror Symmetry (with SYZ for Rigid CY manifolds), Proceedings of ICCM, 2001. 
\bibitem{Sca}F. Scattone, On the compactification of moduli spaces for algebraic K3 surfaces, Mem. AMS 70 (1987) No.374.
\bibitem{SS}N. Sheridan and I. Smith, Homological mirror symmetry for generalized Greene--Plesser mirrors, Invent. Math. 224 (2021), no. 2, 627-682. 
\end{thebibliography}
\end{document}